\documentclass[11pt]{article}

\usepackage{amsmath,amssymb,amsthm}
\usepackage[margin=1in]{geometry}
\usepackage[hidelinks]{hyperref}

\theoremstyle{plain}
\newtheorem{theorem}{Theorem}
\newtheorem{lemma}[theorem]{Lemma}
\newtheorem{corollary}[theorem]{Corollary}
\newtheorem{claim}{Claim}[theorem]

\theoremstyle{remark}

\newcommand{\F}{\mathcal{F}}
\newcommand{\N}{\mathcal{N}}

\newcommand{\bb}{\binom}
\newcommand{\abs}[1]{\left|#1\right|}

\newcommand{\bsupp}{\operatorname{bsupp}}

\title{Refinements of Alon–Babai–Suzuki-type intersection theorems via non-shadows and binomial support}
\author{
Jiangdong Ai\thanks{School of Mathematical Sciences and LPMC, Nankai University. \texttt{jd@nankai.edu.cn}. Funded by the National Natural Science Foundation of China (No.~12522117, No.~12401456), the Natural Science Foundation of Tianjin (No.~24JCQNJC01960), and the Fundamental and Interdisciplinary Disciplines Breakthrough Plan of the Ministry of Education of China (JYB2025XDXM207).}%
\and
Mingyu Liu\thanks{School of Mathematical Sciences and LPMC, Nankai University. \texttt{778016326@qq.com.}
}}
\date{}

\begin{document}
\maketitle

\begin{abstract}
We prove a multilevel \emph{non-shadow} refinement of the Alon--Babai--Suzuki (ABS)
nonuniform restricted-intersection theorem.
Let $K=\{k_1,\dots,k_r\}$ and let $L$ be a set with $|L|=s$.
If $\F\subseteq \bigcup_{k\in K}\binom{[n]}{k}$ is $L$-intersecting and $k_i>s-r$ for every $i$, then
$$
  |\F|+\sum_{j=s-r+1}^{s}|\N_j(\F)|\le N(n,s,r),
  \qquad\text{equivalently}\qquad
  |\F|\le \sum_{j=s-r+1}^{s}|\partial_j\F|.
$$
Thus the ABS bound is sharpened by the total non-shadow deficit on the top $r$ levels.

In the modular setting, we take a coefficient-sensitive viewpoint: the polynomial method depends not just on the degree of the annihilator polynomial
$P_L(t)=\prod_{\ell\in L}(t-\ell)\in\mathbb{F}_p[t]$,
but on which binomial terms actually appear in it.
This yields a gap-free modular bound depending only on the active support levels of $P_L$.
For almost-initial residue patterns
$$
  L=\{0,1,\dots,s-m-1\}\cup R \pmod p
$$
we obtain the collapse
$$
  |\F|\le \sum_{i=0}^{m}\binom{n}{s-i}.
$$
In particular, for consecutive residues $L=\{0,1,\dots,s-1\}\pmod p$ we get the sharp bound
$|\F|\le \binom{n}{s}$, giving a partial negative answer to a question of Alon--Babai--Suzuki:
the modular ABS bound $N(n,s,r)$ is not attainable in the consecutive-residue regime whenever $r\ge 2$.
\end{abstract}

\section{Introduction}

Restricted-intersection problems form one of the central themes of extremal set theory.
Given a set $L\subseteq \mathbb{Z}_{\ge 0}$, a family $\F\subseteq 2^{[n]}$ is called
\emph{$L$-intersecting} if
$$
  \abs{A\cap B}\in L\qquad\text{for all distinct }A,B\in\F.
$$
This framework contains, among others, the Erd\H{o}s--Ko--Rado theorem~\cite{EKR}, Fisher-type inequalities~\cite{Fisher},
and the Ray--Chaudhuri--Wilson theorem~\cite{RCW}.
A major source of progress in the area is the linear algebra method of Frankl--Wilson,
Ray--Chaudhuri--Wilson, Snevily, and others
\cite{FranklW,RCW,Snevily95,Snevily03}.

The nonuniform setting was treated by Alon, Babai, and Suzuki \cite{ABS}.
For integers $n\ge 1$ and $1\le r\le s$, write
\begin{equation}\label{eq:Nnsr}
  N(n,s,r):=\bb{n}{s}+\bb{n}{s-1}+\cdots+\bb{n}{s-r+1}.
\end{equation}

\begin{theorem}[Alon--Babai--Suzuki \cite{ABS}]\label{thm:ABS}
Let $K=\{k_1,\dots,k_r\}$ and $L=\{\ell_1,\dots,\ell_s\}$ be sets of nonnegative integers, and assume
$k_i>s-r$ for every $i$.
If $\F\subseteq \bigcup_{k\in K}\binom{[n]}{k}$ is $L$-intersecting, then
$$
  \abs{\F}\le N(n,s,r).
$$
\end{theorem}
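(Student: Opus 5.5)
We follow the polynomial (linear algebra) method of Alon--Babai--Suzuki, organised around the multilinear polynomial space. Order $\F=\{A_1,\dots,A_m\}$ so that $|A_1|\le |A_2|\le\cdots\le|A_m|$, and let $v_i\in\{0,1\}^n$ be the characteristic vectors. For each $i$ define
$$
  f_i(x)=\prod_{\ell\in L,\ \ell<|A_i|}\bigl(v_i\cdot x-\ell\bigr),
$$
and let $\bar f_i$ denote its multilinearization (replace each $x_j^a$ by $x_j$). These functions agree on $\{0,1\}^n$ and each $\bar f_i$ has degree at most $s$. The triangular condition holds: $f_i(v_i)\neq 0$, since every factor is $|A_i|-\ell>0$. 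For $j<i$ we have $f_i(v_j)=0$, because $|A_i\cap A_j|<|A_i|$ whenever $|A_j|\le |A_i|$ and $A_j\ne A_i$, so $|A_i\cap A_j|$ is some $\ell\in L$ with $\ell<|A_i|$. Hence $\bar f_1,\dots,\bar f_m$ are linearly independent. On its own this only gives $\sum_{i\le s}\binom ni$.

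To obtain $N(n,s,r)$ we add auxiliary polynomials that kill the low-degree part. Following ABS, consider the polynomials $g_I(x)=\bigl(\sum_{j=1}^n x_j - k_1\bigr)\cdots\bigl(\sum_j x_j-k_r\bigr)\, x_I$ (multilinearized), for all $I\subseteq[n]$ with $|I|\le s-r$. Each $g_I$ vanishes on every $v_i$, since $|A_i|\in K$. We claim that $\{\bar f_i\}\cup\{g_I\}$ is linearly independent. The space has dimension $\sum_{i\le s}\binom ni$, and there are $\sum_{i\le s-r}\binom ni$ of the $g_I$, so the claim gives $m\le N(n,s,r)$.

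To prove the claim, suppose $\sum_i\lambda_i\bar f_i+\sum_I\mu_I g_I=0$. Evaluating at $v_1,v_2,\dots$ in order, the $g_I$ vanish and triangularity forces each $\lambda_i=0$. It remains to show the $g_I$ are independent, i.e.\ that $h=\sum_I \mu_I x_I$ of degree $\le s-r$ satisfies $\prod_t(\sum x_j-k_t)\,h\equiv 0$ on the cube only if $h=0$. Suppose $h\neq 0$ and pick $I$ of minimal size with $\mu_I\ne0$. Evaluate at characteristic vectors $v_J$ with $J\supseteq I$ and $|J|\le s-r$. Then $|J|<k_t$ for every $t$, so the product is nonzero there, giving $h(v_J)=0$ for all such $J$. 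By Möbius inversion over subsets containing $I$ (induction on $|J|$, using minimality) this forces $\mu_I=0$, a contradiction. This is exactly where the hypothesis $k_i>s-r$ is used.

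The main obstacle is this last independence step, specifically making the inversion clean. A multilinear polynomial of degree $\le d$ vanishing on all $v_J$ with $|J|\le d$ is identically zero; this follows by evaluating at $v_J$ in increasing order of $|J|$, since $h(v_J)=\sum_{I\subseteq J}\mu_I$. We apply this with $d=s-r$ to $h$ directly: all $v_J$ with $|J|\le s-r$ have weight below every $k_t$, so the product is nonzero at them. Everything else is routine bookkeeping of degrees.
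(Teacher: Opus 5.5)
Your proof is correct and is essentially the argument the paper uses inside its proof of Theorem~\ref{thm:main} with the non-shadow monomials omitted: the triangular family $f_i$ together with the filter polynomials $x_I\prod_{k\in K}(\sum_j x_j-k)$ for $|I|\le s-r$, counted against the space of multilinear polynomials of degree at most $s$. The only difference is that the paper cites Lemma~\ref{lem:ABS21} for the independence of the $x_I g$, whereas you prove that special case directly from $h(v_J)=\sum_{I\subseteq J}\mu_I$.
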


The classical ABS proof constructs a triangular family of multilinear polynomials and compares it with the
space of all multilinear polynomials of degree at most $s$.
The first point of the present paper is that one can often do better by adjoining additional monomials
coming from \emph{non-shadows}.
Indeed, if a $j$-set $T$ is not contained in any member of $\F$, then the monomial $x_T$ vanishes on all of
$\F$ and can be added to the ABS polynomial family without destroying linear independence.
This idea appeared in a different context in the work of Gao--Liu--Xu \cite{GLX};
here we show that in the nonuniform ABS setting it can be exploited simultaneously on several levels.

For $t\in\{0,1,\dots,n\}$, let
$$
  \partial_t\F:=\{T\in\binom{[n]}{t}: \exists F\in\F\text{ with }T\subseteq F\}
$$
be the $t$-shadow of $\F$, and let
$$
  \N_t(\F):=\binom{[n]}{t}\setminus \partial_t\F
$$
be the $t$-non-shadow.
Our first main theorem is the following multilevel refinement of Theorem~\ref{thm:ABS}.

\begin{theorem}[Multilevel non-shadow ABS theorem]\label{thm:main}
Under the hypotheses of Theorem~\ref{thm:ABS},
\begin{equation}\label{eq:main}
  \abs{\F}+\sum_{j=s-r+1}^{s}\abs{\N_j(\F)}\le N(n,s,r).
\end{equation}
Equivalently,
\begin{equation}\label{eq:shadowform}
  \abs{\F}\le \sum_{j=s-r+1}^{s}\abs{\partial_j\F}.
\end{equation}
\end{theorem}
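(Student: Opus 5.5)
We follow the ABS polynomial-method proof and enlarge its family of linearly independent polynomials by non-shadow monomials, as outlined in the introduction. For $F\in\F$ let $v_F\in\{0,1\}^n$ be its characteristic vector. Order $\F$ by nondecreasing size, $F_1,\dots,F_m$, and set
$$
  f_F(x)=\prod_{\ell\in L,\ \ell<|F|}\bigl(v_F\cdot x-\ell\bigr),
$$
multilinearized, i.e.\ with every $x_i^2$ replaced by $x_i$. Then $f_F(v_F)\neq 0$, and $f_F(v_G)=0$ whenever $G\ne F$ and $|G|\le|F|$, since $|F\cap G|<|F|$ lies in $L$. Following ABS, we also add the auxiliary polynomials $x_I\prod_{i=1}^{r}\bigl(\sum_j x_j-k_i\bigr)$, with $I$ ranging over sets of size $\le s-r$, multilinearized. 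These vanish on all $v_F$. Their multilinearizations are linearly independent, because $k_i>s-r$, and this is where that hypothesis is used. The standard ABS count then shows that all these polynomials are independent and span a subspace of the multilinear polynomials of degree $\le s$ of dimension $|\F|+\sum_{j\le s-r}\binom{n}{j}$. This is exactly how $N(n,s,r)$ arises.

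Next we adjoin the monomials $x_T$ for $T\in\N_j(\F)$ and $s-r+1\le j\le s$. Each such monomial vanishes at every $v_F$, since $T\not\subseteq F$. We aim to prove that the enlarged family is independent in the space of multilinear polynomials of degree $\le s$, which has dimension $\sum_{j\le s}\binom{n}{j}$. Independence immediately gives \eqref{eq:main}. The shadow form \eqref{eq:shadowform} follows from \eqref{eq:main} because $|\N_j|=\binom{n}{j}-|\partial_j\F|$.

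To prove independence, suppose a vanishing linear combination
$$
  \sum_F\lambda_F f_F+\sum_I\mu_I g_I+\sum_T\nu_T x_T=0 .
$$
Evaluating at $v_{F}$ in the ordering from largest to smallest size kills the $g_I$ and $x_T$ terms. Triangularity then forces all $\lambda_F=0$. (Equivalently, evaluate in increasing size order, using that $f_F(v_G)=0$ for $|G|\le|F|$, $G\ne F$; that is the usual direction.) What remains is $\sum\mu_I g_I+\sum\nu_T x_T=0$, with every $|T|\ge s-r+1$.

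The main obstacle is this last step, showing that the $x_T$ are independent from the span of the $g_I$. The $g_I$ have top-degree parts supported on all monomials of degree up to $|I|+r\le s$, so naive degree separation fails. My first approach is to evaluate on characteristic vectors $v_S$ with $|S|\le s-r$. Every $x_T$ vanishes there, because $|T|>s-r$. Meanwhile $g_I(v_S)=[I\subseteq S]\prod_i(|S|-k_i)$, and each factor $|S|-k_i$ is nonzero since $|S|\le s-r<k_i$. This system is triangular under inclusion order, so all $\mu_I=0$, and the distinct monomials $x_T$ are trivially independent. I expect the delicate point to be the bookkeeping for the $f_F$ step in the presence of the extra monomials, namely confirming that evaluation at the $v_F$ annihilates everything else. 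It does, because every added polynomial vanishes on all of $\F$.
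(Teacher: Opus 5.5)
Your proof is correct and follows the paper's argument essentially step for step. It uses the same triangular family $f_F$, the same filter polynomials $x_I g$, the same adjoined non-shadow monomials, and the same two-stage independence argument: evaluate on $\F$ to kill the $\lambda_F$, then on sets of size at most $s-r$ to kill the $\mu_I$, which the paper packages as Lemma~\ref{lem:adjoin}. One small slip: the back-substitution for the $\lambda_F$ must run in increasing size order, as in your parenthetical and the paper's ``smallest index $i_0$'' argument, not from largest to smallest, since $f_F(v_G)\neq 0$ whenever $F\subsetneq G$.
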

Theorem~\ref{thm:main} is sharp.
Indeed, if $L=\{0,1,\dots,s-1\}$ and
$$
  \F=\bigcup_{k=s-r+1}^{s}\binom{[n]}{k},
$$
then $\partial_j\F=\binom{[n]}{j}$ for every $j\in\{s-r+1,\dots,s\}$, and hence equality holds in
\eqref{eq:shadowform}.

Thus the ABS upper bound is sharpened by the total deficit of the top $r$ shadow levels.
In particular, if $\abs{\F}$ is within $\Delta$ of $N(n,s,r)$, then
$$
  \sum_{j=s-r+1}^{s}\abs{\N_j(\F)}\le \Delta,
$$
so every extremal or near-extremal family must almost cover each of the top $r$ relevant levels.

Our second main contribution concerns the modular setting and is conceptually different.
Let $p$ be a prime and let $L\subseteq\mathbb{F}_p$ with $\abs{L}=s<p$.
Define the annihilator polynomial
$$
  P_L(t):=\prod_{\ell\in L}(t-\ell)\in\mathbb{F}_p[t].
$$
Since $s<p$, we may expand $P_L$ in the binomial basis as
\begin{equation}\label{eq:PL-expand-intro}
  P_L(t)=\sum_{j=0}^{s} c_j(L)\binom{t}{j}.
\end{equation}
We write
$$
  \bsupp(L):=\{j\in\{0,1,\dots,s\}: c_j(L)\neq 0\}
$$
for the \emph{binomial support} of $P_L$.

The binomial basis is natural here because $\binom{|A\cap B|}{j}$ counts the number of $j$-subsets contained in $A\cap B$.
In other words, each nonzero coefficient $c_j(L)$ tells us that the argument uses the $j$th level of the
Boolean lattice.
From this viewpoint, the relevant parameter is not just the degree of the annihilator polynomial, but its
binomial support, which determines the \emph{effective ambient dimension} of the polynomial method.
This leads to a gap-free coefficient-sensitive modular bound in which only the levels in $\bsupp(L)$ matter.
This is complementary to the shifted ABS strengthenings of Chen--Liu and Wang--Wei--Ge
\cite{CL09,WWG18}: rather than imposing separation or gap conditions, our bound depends only on the support
pattern of $P_L$ in \eqref{eq:PL-expand-intro}.

Our coefficient-sensitive theorem implies that every modular family satisfying the usual size and
intersection congruence conditions is bounded by
$$
  \abs{\F}\le \sum_{j\in \bsupp(L)}\binom{n}{j}.
$$
For almost-initial residue patterns
$$
  L=\{0,1,\dots,s-m-1\}\cup R\pmod p,
$$
with $\abs{R}=m$, the support is forced into the top $m+1$ levels, and hence
$$
  \abs{\F}\le \sum_{i=0}^{m}\binom{n}{s-i}.
$$
In particular, for the consecutive pattern $L=\{0,1,\dots,s-1\}\pmod p$ one has
$P_L(t)=(t)_s=s!\binom{t}{s}$, so only the top level survives and we obtain the sharp bound
$\abs{\F}\le\binom{n}{s}$.
Consequently, the modular ABS bound $N(n,s,r)$ is not attainable whenever $r\ge 2$.

\medskip
Our paper is organized as follows.
In Section 2, we prove Theorem~\ref{thm:main} and present a modular non-shadow variant under the stronger low-level exclusion hypothesis $k>s-r$ for all $k\in K$.
In Section~\ref{sec:coeff}, we prove the coefficient-sensitive modular bound and its sparse-level collapse
corollaries.
We conclude with a brief discussion of extremality, sharpness, and possible extensions beyond the Boolean
setting.

\section{The multilevel non-shadow ABS theorem}\label{sec:proof-main}

We work in the multilinear polynomial ring over $\Omega:=\{0,1\}^n$.
Every function $h:\Omega\to \mathbb{R}$ is represented uniquely by a multilinear polynomial in
$\mathbb{R}[x_1,\dots,x_n]$.
For $I\subseteq[n]$, write
$$
  x_I:=\prod_{i\in I}x_i,
$$
with $x_\emptyset=1$.
For a subset $J\subseteq[n]$, let $v_J\in\Omega$ be its characteristic vector and write $h(J):=h(v_J)$.
Then
\begin{equation}\label{eq:indicator}
  x_I(J)=\begin{cases}
    1,& I\subseteq J,\\
    0,& I\nsubseteq J.
  \end{cases}
\end{equation}

We use the following lemma of Alon--Babai--Suzuki; see \cite[Lemma~2.1]{ABS}.

\begin{lemma}[Alon--Babai--Suzuki~\cite{ABS}]\label{lem:ABS21}
Let $f:\Omega\to\mathbb{R}$ be a function such that $f(I)\neq 0$ for every $I\subseteq[n]$ with $\abs{I}\le t$.
Then the family $\{x_I f:\abs{I}\le t\}$ is linearly independent in $\mathbb{R}^{\Omega}$.
\end{lemma}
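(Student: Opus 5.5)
We prove Lemma~\ref{lem:ABS21} by the standard triangularity argument, ordering sets by size. Suppose, for contradiction, that there is a nontrivial relation
$$
  \sum_{\abs{I}\le t}\lambda_I\, x_I f = 0 \quad\text{as functions on }\Omega,
$$
with not all $\lambda_I$ zero. Among the indices $I$ with $\lambda_I\neq 0$, choose one, $I_0$, of \emph{minimal} cardinality. We will evaluate the relation at the point $v_{I_0}$ and show that all but one term vanish.

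By \eqref{eq:indicator}, $x_I(I_0)=1$ if $I\subseteq I_0$ and $0$ otherwise. If $I\subseteq I_0$ and $I\neq I_0$, then $\abs{I}<\abs{I_0}$, and minimality forces $\lambda_I=0$. Therefore evaluating at $I_0$ gives
$$
  0=\sum_{\abs{I}\le t}\lambda_I\, x_I(I_0) f(I_0)=\lambda_{I_0} f(I_0).
$$
Since $\abs{I_0}\le t$, the hypothesis gives $f(I_0)\neq 0$. Hence $\lambda_{I_0}=0$, contradicting the choice of $I_0$. So all coefficients vanish, and the family is linearly independent.

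There is no real obstacle. The one point to check is that the ordering must go by increasing size, that is, minimal $I_0$, so that only subsets of $I_0$ survive the evaluation. Choosing a maximal element instead would fail, because supersets of $I_0$ vanish at $v_{I_0}$ anyway; the useful vanishing comes from minimality removing the proper subsets. Note also that the hypothesis on $f$ is used only at the single point $I_0$, which has $\abs{I_0}\le t$. The same argument works verbatim over any field, which is relevant for the modular variants later in the paper.
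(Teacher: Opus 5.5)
Your proof is correct. The paper gives no proof of its own and simply cites Lemma~2.1 of Alon--Babai--Suzuki; your argument (take a minimal $I_0$ with $\lambda_{I_0}\neq 0$ and evaluate at $v_{I_0}$) is the standard triangularity proof of that lemma, and the same mechanism the paper uses for the $x_J$ terms in Lemma~\ref{lem:adjoin}. Your remark that the argument works verbatim over any field is worth keeping: the paper's modular variant in Section~2 applies this lemma over $\mathbb{F}_p$, even though the paper states it only over $\mathbb{R}$.
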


The next lemma is the mechanism that allows non-shadows to be added on several levels at once.

\begin{lemma}\label{lem:adjoin}
Let $f:\Omega\to\mathbb{R}$ and let $t\ge 0$.
Assume that $f(I)\neq 0$ for every $I\subseteq[n]$ with $\abs{I}\le t$.
Let $\mathcal{J}\subseteq2^{[n]}$ be a family such that $\abs{J}>t$ for all $J\in\mathcal{J}$.
Then
$$
  \{x_I f:\abs{I}\le t\}\cup\{x_J:J\in\mathcal{J}\}
$$
are linearly independent in $\mathbb{R}^{\Omega}$.
\end{lemma}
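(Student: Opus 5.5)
We argue by contradiction, using a minimal-set evaluation trick. Suppose there is a nontrivial relation
$$
  \sum_{\abs{I}\le t}\alpha_I\, x_I f+\sum_{J\in\mathcal{J}}\beta_J\, x_J=0
$$
as functions on $\Omega$. We first show that all $\beta_J$ vanish, and then appeal to Lemma~\ref{lem:ABS21}. The point is that every $x_J$ with $J\in\mathcal{J}$ vanishes at all points $v_K$ with $\abs{K}\le t$, because $\abs{J}>t$ forces $J\nsubseteq K$ by \eqref{eq:indicator}. Meanwhile $f$ is nonzero at all such points.

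\textbf{Step 1 (the $\alpha$'s vanish).} Assume some $\alpha_I\neq 0$, and choose $I_0$ of minimal size among all $I$ with $\alpha_I\neq0$. Evaluate the relation at $v_{I_0}$.
\begin{itemize}
\item Since $\abs{I_0}\le t<\abs{J}$, every term $x_J(I_0)$ is $0$.
\item A term $x_I(I_0)$ with $\alpha_I\ne0$ is nonzero only if $I\subseteq I_0$. By minimality this forces $I=I_0$.
\end{itemize}
The relation therefore reduces to $\alpha_{I_0}f(I_0)=0$. Because $f(I_0)\neq0$, we get $\alpha_{I_0}=0$, a contradiction. Hence all $\alpha_I=0$.

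\textbf{Step 2 (the $\beta$'s vanish).} The relation now reads $\sum_J\beta_J x_J=0$. The distinct multilinear monomials $x_J$ are linearly independent as functions on $\Omega$. This follows from uniqueness of the multilinear representation, or directly from the same minimal-set evaluation: take $J_0$ minimal with $\beta_{J_0}\ne0$ and evaluate at $v_{J_0}$. Hence all $\beta_J=0$.

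Step 1 is essentially the argument behind Lemma~\ref{lem:ABS21}, so the relation could alternatively be finished by citing that lemma once the $\beta$'s are handled. The only step needing care is the evaluation order. One must evaluate at small sets first, where the $x_J$ terms vanish, rather than try to eliminate the $\beta$'s first. Evaluating at $v_J$ with $J\in\mathcal{J}$ would reintroduce $x_If$ terms with unknown values of $f$ on large sets, where no nonvanishing hypothesis is available.
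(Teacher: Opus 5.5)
Your proof is correct and follows essentially the same route as the paper. You first eliminate the $\alpha_I$ by evaluating only at sets of size at most $t$, where every $x_J$ with $J\in\mathcal{J}$ vanishes. You then eliminate the $\beta_J$ by inclusion-triangularity of the monomials; the paper orders $\mathcal{J}$ by size where you take a minimal $J_0$, which amounts to the same thing.

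Your inline minimal-$I_0$ argument in Step~1 is slightly more precise than the paper's appeal to Lemma~\ref{lem:ABS21}. That lemma is stated for relations holding on all of $\Omega$, not only on sets of size at most $t$. One correction: your opening sentence says the $\beta_J$ are handled first, but your steps (correctly) do the $\alpha_I$ first.
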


\begin{proof}
Suppose that
$$
  \sum_{\abs{I}\le t}\alpha_I x_I f+\sum_{J\in\mathcal{J}}\beta_J x_J=0
$$
as functions on $\Omega$.
Evaluating on every subset $S\subseteq[n]$ with $\abs{S}\le t$, the second sum vanishes by
\eqref{eq:indicator}, so $\sum_{\abs{I}\le t}\alpha_I x_I f$ vanishes on all such $S$.
Lemma~\ref{lem:ABS21} therefore implies $\alpha_I=0$ for every $\abs{I}\le t$.

We are left with $\sum_{J\in\mathcal{J}}\beta_J x_J=0$.
Order the sets in $\mathcal{J}$ as $J_1,\dots,J_m$ so that
$\abs{J_1}\le\abs{J_2}\le\cdots\le\abs{J_m}$.
Evaluating at $J_a$ gives
$$
  0=\sum_{b=1}^{m}\beta_{J_b}x_{J_b}(J_a).
$$
If $b>a$ and $x_{J_b}(J_a)=1$, then $J_b\subseteq J_a$, hence $\abs{J_b}\le\abs{J_a}$.
By the chosen order this forces $\abs{J_b}=\abs{J_a}$ and then $J_b=J_a$, impossible.
Thus $x_{J_b}(J_a)=0$ for all $b>a$, while $x_{J_a}(J_a)=1$.
So each equation reduces to $\beta_{J_a}=0$.
\end{proof}
We now prove Theorem~\ref{thm:main} by combining the triangular ABS family with monomials indexed by
missing top-level shadows.
\begin{proof}[Proof of Theorem~\ref{thm:main}]
Write $\F=\{A_1,\dots,A_m\}$ and order the sets so that
$\abs{A_1}\le\abs{A_2}\le\cdots\le\abs{A_m}$.

\medskip
For each $i$, define
\begin{equation}\label{eq:fi}
  f_i(x):=\prod_{\ell\in L:\ \ell<\abs{A_i}}(v_{A_i}\cdot x-\ell),
\end{equation}
where $v_{A_i}\cdot x=\sum_{j=1}^{n}(v_{A_i})_j x_j$.

\begin{claim}\label{clm:triangular}
$f_i(A_i)\neq 0$ and $f_i(A_j)=0$ for all $j<i$.
\end{claim}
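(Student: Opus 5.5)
The claim can be checked directly: evaluate $f_i$ at characteristic vectors, using $v_{A_i}\cdot v_{A_j}=\abs{A_i\cap A_j}$. This gives
$$
  f_i(A_j)=\prod_{\ell\in L:\ \ell<\abs{A_i}}\bigl(\abs{A_i\cap A_j}-\ell\bigr).
$$
Both assertions then reduce to locating the integer $\abs{A_i\cap A_j}$ relative to the factors of this product.

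For the nonvanishing at the diagonal, take $j=i$. The product becomes $\prod_{\ell\in L,\ \ell<\abs{A_i}}(\abs{A_i}-\ell)$. Each factor is a positive integer, since we only include $\ell<\abs{A_i}$. Hence $f_i(A_i)\neq 0$; an empty product equals $1$, which is also nonzero.

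For the vanishing below the diagonal, fix $j<i$. The ordering gives $\abs{A_j}\le\abs{A_i}$, and $A_j\ne A_i$ because they are distinct members of $\F$. I claim $\abs{A_i\cap A_j}<\abs{A_i}$. If instead $\abs{A_i\cap A_j}=\abs{A_i}$, then $A_i\subseteq A_j$. Combined with $\abs{A_j}\le\abs{A_i}$, this forces $A_i=A_j$, a contradiction. Since $\F$ is $L$-intersecting, $\ell_0:=\abs{A_i\cap A_j}\in L$, and $\ell_0<\abs{A_i}$. So $(t-\ell_0)$ is one of the factors in the product defining $f_i$, and it vanishes at $A_j$. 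Therefore $f_i(A_j)=0$.

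The only delicate point is the strict inequality $\abs{A_i\cap A_j}<\abs{A_i}$ when the two sets have equal size. It uses distinctness of the two sets together with the size ordering. That ordering is exactly why the proof sorts $\F$ by nondecreasing cardinality.
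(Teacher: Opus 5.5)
Your proof is correct and follows the paper's argument essentially verbatim. At the diagonal, each factor $|A_i|-\ell$ is nonzero (indeed positive). For $j<i$, the size ordering plus distinctness gives $|A_i\cap A_j|<|A_i|$, so the factor indexed by $\ell^*=|A_i\cap A_j|\in L$ appears in $f_i$ and vanishes at $A_j$. You justify the strict inequality more explicitly than the paper, which just cites $A_i\neq A_j$. The only cosmetic slip is writing the vanishing factor as $(t-\ell_0)$ rather than $(v_{A_i}\cdot x-\ell_0)$.
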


\begin{proof}
We have $f_i(A_i)=\prod_{\ell\in L,\,\ell<\abs{A_i}}(\abs{A_i}-\ell)\neq 0$
since each factor is nonzero.
If $j<i$, then $\abs{A_j}\le\abs{A_i}$ and, since $\F$ is $L$-intersecting,
$\abs{A_i\cap A_j}\in L$.
Because $A_i\neq A_j$, we have $\abs{A_i\cap A_j}<\abs{A_i}$,
so the factor indexed by $\ell^*:=\abs{A_i\cap A_j}$ in~\eqref{eq:fi} vanishes at $A_j$.
\end{proof}

\noindent Claim~\ref{clm:triangular} implies that $f_1,\dots,f_m$ are linearly independent.

\medskip
Define
$$
  g(x):=\prod_{k\in K}\Bigl(\sum_{j=1}^{n}x_j-k\Bigr).
$$

\begin{claim}\label{clm:filter}
$g(A)=0$ for every $A\in\F$, and $g(I)\neq 0$ for every $I\subseteq[n]$ with $\abs{I}\le s-r$.
Consequently, $\{x_I g:\abs{I}\le s-r\}$ is linearly independent.
\end{claim}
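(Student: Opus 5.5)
The claim has three parts, and I would verify them in order: first that $g$ vanishes on $\F$, then that $g$ is nonzero on small sets, and finally that the family $\{x_I g\}$ is independent.

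\emph{Vanishing on $\F$.} For $A\in\F$ we have $|A|=k$ for some $k\in K$. Evaluating at $A$ gives $\sum_{j}x_j(A)=|A|=k$, so the factor indexed by this $k$ in the definition of $g$ is zero. Hence $g(A)=0$.

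\emph{Nonvanishing on small sets.} Let $I\subseteq[n]$ with $|I|\le s-r$. Then
$$
  g(I)=\prod_{k\in K}(|I|-k).
$$
By the hypothesis of Theorem~\ref{thm:ABS}, every $k\in K$ satisfies $k>s-r\ge |I|$. Each factor $|I|-k$ is therefore a strictly negative integer, so it is nonzero in $\mathbb{R}$. The product is a nonzero real number, so $g(I)\neq 0$.

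\emph{Linear independence.} The previous step shows that $g$ satisfies the hypothesis of Lemma~\ref{lem:ABS21} with $t=s-r$. That lemma then gives directly that $\{x_I g:\abs{I}\le s-r\}$ is linearly independent in $\mathbb{R}^{\Omega}$. In the degenerate case $s=r$, the index set consists only of $I=\emptyset$, and the statement reduces to $g\not\equiv 0$. This follows from $g(\emptyset)=\prod_{k\in K}(-k)\neq 0$, which holds because every $k\in K$ is positive when $k>0=s-r$.

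None of these steps presents a real obstacle; the only point requiring care is the strict inequality $k>|I|$, which is exactly where the low-level exclusion hypothesis $k_i>s-r$ is used. The vanishing property $g(A)=0$ on $\F$ is what will later let the family $\{x_I g\}$ be merged with the triangular family $f_1,\dots,f_m$ from Claim~\ref{clm:triangular}, in the standard ABS way.
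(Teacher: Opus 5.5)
Your proof is correct and matches the paper's argument: some factor of $g$ vanishes at each $A\in\F$, every factor is nonzero at sets of size at most $s-r$ because each $k>s-r$, and Lemma~\ref{lem:ABS21} with $t=s-r$ gives the independence. Your separate treatment of the case $s=r$ is harmless but unnecessary, since Lemma~\ref{lem:ABS21} already covers $t=0$.
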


\begin{proof}
For $A\in\F$ we have $\abs{A}\in K$, so some factor of $g$ vanishes at $A$.
If $\abs{I}\le s-r$, then $\abs{I}<k$ for every $k\in K$ (since each $k>s-r$),
so every factor of $g$ is nonzero at $I$, giving $g(I)\neq 0$.
Linear independence then follows from Lemma~\ref{lem:ABS21}.
\end{proof}

\medskip
Let
$$
  \mathcal{J}:=\bigcup_{j=s-r+1}^{s}\N_j(\F),
$$
and set
$$
  \mathcal{B}:=\{f_i:1\le i\le m\}\cup\{x_I g:\abs{I}\le s-r\}\cup\{x_J:J\in\mathcal{J}\}.
$$

\begin{claim}\label{clm:lindep}
$\mathcal{B}$ is linearly independent.
\end{claim}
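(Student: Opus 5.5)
We argue directly from a vanishing linear combination
$$
  \sum_{i=1}^{m}\lambda_i f_i+\sum_{\abs{I}\le s-r}\mu_I x_I g+\sum_{J\in\mathcal{J}}\beta_J x_J=0
$$
on $\Omega$, and kill the three groups of coefficients in turn: first the $\lambda_i$ by evaluating on members of $\F$, then the remaining coefficients at once by Lemma~\ref{lem:adjoin} with $t=s-r$.

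\textbf{Step 1: the terms $x_J$ vanish on $\F$.} Evaluate at any $A\in\F$. By Claim~\ref{clm:filter}, $g(A)=0$, so every term $x_I g$ vanishes. For $J\in\mathcal{J}$, the set $J$ lies in some non-shadow $\N_j(\F)$, so $J\not\subseteq A$ and $x_J(A)=0$ by \eqref{eq:indicator}. Hence $\sum_i\lambda_i f_i(A)=0$ for every $A\in\F$.

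\textbf{Step 2: the $\lambda_i$ vanish.} By the triangularity in Claim~\ref{clm:triangular}, evaluate successively at $A_1,A_2,\dots,A_m$. At $A_1$, only $f_1$ can be nonzero, so $\lambda_1=0$. Inductively, at $A_k$ the terms with $i>k$ vanish because $f_i(A_k)=0$ for $k<i$, the terms with $i<k$ are already zero, and $f_k(A_k)\neq 0$; hence $\lambda_k=0$. Thus all $\lambda_i=0$. This ordering direction is the one point to check carefully.

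\textbf{Step 3: the rest vanish.} What remains is
$$
  \sum_{\abs{I}\le s-r}\mu_I x_I g+\sum_{J\in\mathcal{J}}\beta_J x_J=0 .
$$
Every $J\in\mathcal{J}$ satisfies $\abs{J}\ge s-r+1>s-r$, and $g(I)\neq 0$ whenever $\abs{I}\le s-r$ by Claim~\ref{clm:filter}. So Lemma~\ref{lem:adjoin} applies with $t=s-r$ and $f=g$, and gives $\mu_I=0$ and $\beta_J=0$ for all indices. This proves the claim.

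I do not expect a genuine obstacle: the non-shadow property is exactly what lets the $x_J$ disappear on $\F$ in Step 1, and Lemma~\ref{lem:adjoin} was designed for Step 3.
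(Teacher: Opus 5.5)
Your proof is correct and follows essentially the same route as the paper. You eliminate the $\lambda_i$ by evaluating on members of $\F$ using the triangular property; your forward induction over $A_1,A_2,\dots$ is equivalent to the paper's minimal-index contradiction. You then dispose of the remaining coefficients via Lemma~\ref{lem:adjoin} with $t=s-r$ and $f=g$, exactly as the paper does.
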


\begin{proof}
Suppose that
$$
  \sum_{i=1}^{m}\lambda_i f_i
  +\sum_{\abs{I}\le s-r}\mu_I x_I g
  +\sum_{J\in\mathcal{J}}\nu_J x_J=0.
$$
We first show all $\lambda_i$ vanish.
If some $\lambda_i\neq 0$, let $i_0$ be the smallest such index.
Evaluating at $A_{i_0}$: the middle sum vanishes by Claim~\ref{clm:filter} since $g(A_{i_0})=0$;
the right sum vanishes because $x_J(A_{i_0})=0$ for every $J\in\mathcal{J}$
(as $J\nsubseteq A_{i_0}$ by definition of the non-shadow);
and all $f_i(A_{i_0})$ with $i>i_0$ vanish by Claim~\ref{clm:triangular}.
Hence $\lambda_{i_0}f_{i_0}(A_{i_0})=0$, contradicting $f_{i_0}(A_{i_0})\neq 0$.
Thus $\lambda_i=0$ for all $i$.

The remaining relation $\sum_{\abs{I}\le s-r}\mu_I x_I g+\sum_{J\in\mathcal{J}}\nu_J x_J=0$,
together with the fact that $\abs{J}>s-r$ for every $J\in\mathcal{J}$ and that $g(I)\neq 0$
for $\abs{I}\le s-r$ (Claim~\ref{clm:filter}), allows us to apply Lemma~\ref{lem:adjoin}
with $t=s-r$ and $f=g$, which forces all $\mu_I$ and $\nu_J$ to vanish.
\end{proof}

\medskip
Every element of $\mathcal{B}$ is a multilinear polynomial of degree at most $s$
(each $f_i$ has degree at most $s$; each $x_Ig$ has degree $\abs{I}+r\le s$;
each $x_J$ has degree $\abs{J}\le s$).
By Claim~\ref{clm:lindep},
$$
  \abs{\mathcal{B}}\le \dim\operatorname{span}\{x_I:\abs{I}\le s\}=\sum_{i=0}^{s}\binom{n}{i}.
$$
On the other hand,
$$
  \abs{\mathcal{B}}=\abs{\F}+\sum_{i=0}^{s-r}\binom{n}{i}+\sum_{j=s-r+1}^{s}\abs{\N_j(\F)}.
$$
Subtracting $\sum_{i=0}^{s-r}\binom{n}{i}$ from both sides and using
$N(n,s,r)=\sum_{i=s-r+1}^{s}\binom{n}{i}$ gives~\eqref{eq:main}.
Finally, \eqref{eq:shadowform} follows from
$\abs{\N_j(\F)}=\binom{n}{j}-\abs{\partial_j\F}$.
\end{proof}

The same argument also gives a modular non-shadow variant under the low-level exclusion hypothesis
$k>s-r$ for all $k\in K$. This is stronger than the original ABS gap-condition assumption, but it
matches the standing hypothesis in Theorem~\ref{thm:main}. We present it for comparison, because in the next section we will prove a different modular theorem.

\begin{theorem}
Let $p$ be a prime, and let $K,L \subseteq \{0,1,\ldots,p-1\}$ be disjoint with $|K|=r$ and
$|L|=s$. Assume that $k>s-r\geq 0$ for every $k\in K$. If $F$ is a family of subsets of $[n]$ such that
\begin{enumerate}
\item[(i)] $|F| \in K+p\mathbb Z$ for every $F\in F$;
\item[(ii)] $|E\cap F| \in L+p\mathbb Z$ for all distinct $E,F\in F$,
\end{enumerate}
then
$$
|F|+\sum_{j=s-r+1}^{s}|N_j(F)| \le N(n,s,r).
$$
Equivalently,
$$
|F| \le \sum_{j=s-r+1}^{s} |\partial_j F|.
$$
\end{theorem}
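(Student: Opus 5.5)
We follow the proof of Theorem~\ref{thm:main}, but work over $\mathbb{F}_p$ instead of $\mathbb{R}$. We use the ring of $\mathbb{F}_p$-valued functions on $\Omega$ with its multilinear representation. First we check that Lemma~\ref{lem:ABS21} and Lemma~\ref{lem:adjoin} hold verbatim over any field. The ABS proof of Lemma~\ref{lem:ABS21} is a triangularity argument: evaluate at a minimal $I$ with nonzero coefficient. Lemma~\ref{lem:adjoin} uses only \eqref{eq:indicator} and that triangularity. Neither needs characteristic zero, so both remain valid.

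Write $\F=\{A_1,\dots,A_m\}$. Since sizes are now only known mod $p$, we use the ABS modular ordering: order the sets so that $\abs{A_1}\le\cdots\le\abs{A_m}$ as integers. For each $i$ let $k(i)\in K$ be the residue of $\abs{A_i}$, and set
$$f_i(x)=\prod_{\ell\in L}(v_{A_i}\cdot x-\ell)\in\mathbb{F}_p[x],$$
taking all $s$ factors. This is needed because the integer truncation $\ell<\abs{A_i}$ is meaningless mod $p$.
\begin{itemize}
\item $f_i(A_i)=\prod_{\ell}(k(i)-\ell)\neq 0$, since $K\cap L=\emptyset$.
\item For $j\ne i$, $\abs{A_i\cap A_j}\in L+p\mathbb{Z}$, so $f_i(A_j)=0$ for all $j\neq i$.
\end{itemize}
So we get a diagonal system, which is even better than triangular. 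Each $f_i$ reduces to a multilinear polynomial of degree $\le s$.

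Next let $g(x)=\prod_{k\in K}(\sum_j x_j-k)$ over $\mathbb{F}_p$. Then $g(A)=0$ for $A\in\F$ by (i). For $\abs{I}\le s-r$ we need $g(I)\ne0$, i.e.\ $\abs{I}\not\equiv k\pmod p$ for every $k\in K$.
\begin{itemize}
\item Since $s<p$ (as $L\subseteq\{0,\dots,p-1\}$), we have $\abs{I}\le s-r<p$, so $\abs{I}$ is itself a residue in $\{0,\dots,p-1\}$.
\item Each $k\in K$ is also in $\{0,\dots,p-1\}$ and satisfies $k>s-r\ge\abs{I}$, so $\abs{I}\ne k$ as residues.
\end{itemize}
This is exactly where the low-level exclusion hypothesis replaces the ABS gap condition, and it is the one step that needs care. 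The elements $x_Ig$ with $\abs{I}\le s-r$ have degree $\le s$.

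Define $\mathcal{B}$ as before: the $f_i$, the $x_Ig$ with $\abs{I}\le s-r$, and the $x_J$ for $J\in\bigcup_{j=s-r+1}^s\N_j(\F)$. To prove independence, take a vanishing linear combination and evaluate at each $A_i$:
\begin{itemize}
\item the $g$-terms vanish because $g(A_i)=0$;
\item the non-shadow monomials vanish because $J\not\subseteq A_i$;
\item the other $f_{i'}$ vanish by diagonality.
\end{itemize}
Hence every $\lambda_i=0$. The remaining relation is killed by Lemma~\ref{lem:adjoin} with $t=s-r$ and $f=g$.

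Counting against $\dim=\sum_{i\le s}\binom{n}{i}$ and subtracting $\sum_{i\le s-r}\binom ni$ gives the first inequality. The shadow form then follows from $\abs{\N_j(\F)}=\binom nj-\abs{\partial_j\F}$.
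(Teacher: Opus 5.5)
Your proof is correct and follows the paper's argument step for step: the filter polynomial $g$, the non-shadow monomials on levels $s-r+1,\dots,s$, Lemma~\ref{lem:adjoin} with $t=s-r$, and the dimension count. The only deviation is that you take the full product over $L$ to get a diagonal system, which works because $K\cap L=\varnothing$. The paper instead keeps the truncation $\ell<\abs{A_i}$ (reading $\ell$ as an integer in $\{0,\dots,p-1\}$) and gets triangularity from $\ell^*\le\abs{A_i\cap A_j}<\abs{A_i}$, so your remark that the truncation is meaningless mod $p$ is inaccurate, though harmless to your argument.
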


\begin{proof}
Write $F=\{A_1,\ldots,A_m\}$ and order the sets so that
$$
|A_1|\le |A_2|\le \cdots \le |A_m|.
$$
For each $i$, define
$$
f_i(x):=\prod_{\ell\in L:\,\ell<|A_i|}(v_{A_i}\cdot x-\ell),
$$
where
$$
v_{A_i}\cdot x=\sum_{j=1}^n (v_{A_i})_j x_j.
$$

We claim that $f_i(A_i)\neq 0$ and $f_i(A_j)=0$ for all $j<i$. Indeed, if $\ell\in L$ and
$\ell<|A_i|$, then $|A_i|-\ell\neq 0$ in $\mathbb F_p$ because $|A_i| \pmod p \in K$ and
$K\cap L=\varnothing$. Hence $f_i(A_i)\neq 0$.

Now let $j<i$. Then $|A_j|\le |A_i|$ and $A_i\neq A_j$, so
$$
|A_i\cap A_j|<|A_i|.
$$
By (ii), there exists $\ell^*\in L$ such that
$$
|A_i\cap A_j|\equiv \ell^* \pmod p.
$$
Writing
$$
|A_i\cap A_j|=qp+\ell^*
$$
with $0\le \ell^*\le p-1$, we have
$$
\ell^*\le |A_i\cap A_j|<|A_i|.
$$
Therefore the factor indexed by $\ell^*$ appears in the product defining $f_i$, and it vanishes at
$A_j$. Thus $f_i(A_j)=0$ for every $j<i$. It follows that $f_1,\ldots,f_m$ are linearly
independent.

Define
$$
g(x):=\prod_{k\in K}\left(\sum_{j=1}^n x_j-k\right).
$$
If $A\in F$, then $|A|\pmod p\in K$, so $g(A)=0$. If $I\subseteq [n]$ satisfies $|I|\le s-r$, then
$|I|<k$ for every $k\in K$, hence $|I|-k\neq 0$ in $\mathbb F_p$ for every $k\in K$, and so
$g(I)\neq 0$. Lemma 3 therefore implies that
$$
\{x_Ig:\ |I|\le s-r\}
$$
is linearly independent over $\mathbb F_p$.

Let
$$
\mathcal J:=\bigcup_{j=s-r+1}^{s} N_j(F),
$$
and set
$$
B:=\{f_i:\ 1\le i\le m\}\cup \{x_Ig:\ |I|\le s-r\}\cup \{x_T:\ T\in\mathcal J\}.
$$
We claim that $B$ is linearly independent. Suppose that
$$
\sum_{i=1}^m \lambda_i f_i+\sum_{|I|\le s-r}\mu_Ix_Ig+\sum_{T\in\mathcal J}\nu_Tx_T=0.
$$
We first show that all $\lambda_i$ vanish. If some $\lambda_i\neq 0$, let $i_0$ be the smallest such
index. Evaluating at $A_{i_0}$, the middle sum vanishes because $g(A_{i_0})=0$, and the last sum
vanishes because $x_T(A_{i_0})=0$ for every $T\in\mathcal J$, since $T$ is contained in no member
of $F$. Also, $f_i(A_{i_0})=0$ for all $i>i_0$ by the triangular property proved above. Hence
$$
\lambda_{i_0}f_{i_0}(A_{i_0})=0,
$$
contradicting $f_{i_0}(A_{i_0})\neq 0$. Thus $\lambda_i=0$ for all $i$.

The remaining relation
$$
\sum_{|I|\le s-r}\mu_Ix_Ig+\sum_{T\in\mathcal J}\nu_Tx_T=0
$$
together with the facts that $|T|>s-r$ for every $T\in\mathcal J$ and that $g(I)\neq 0$ for all
$|I|\le s-r$ allows us to apply Lemma 4 with $t=s-r$ and $f=g$. Therefore all $\mu_I$ and
$\nu_T$ vanish, and $B$ is linearly independent.

Every element of $B$ is a multilinear polynomial of degree at most $s$: each $f_i$ has degree at
most $s$, each $x_Ig$ has degree $|I|+r\le s$, and each $x_T$ has degree $|T|\le s$. Hence
$$
|B|\le \dim\operatorname{span}\{x_I:\ |I|\le s\}
 =\sum_{i=0}^{s}\binom{n}{i}.
$$
On the other hand,
$$
|B|=|F|+\sum_{i=0}^{s-r}\binom{n}{i}+\sum_{j=s-r+1}^{s}|N_j(F)|.
$$
Subtracting $\sum_{i=0}^{s-r}\binom{n}{i}$ from both sides and using
$$
N(n,s,r)=\sum_{i=s-r+1}^{s}\binom{n}{i}
$$
gives
$$
|F|+\sum_{j=s-r+1}^{s}|N_j(F)|\le N(n,s,r).
$$
The equivalent form follows from
$$
|N_j(F)|=\binom{n}{j}-|\partial_jF|.
$$
\end{proof}

\section{Coefficient-sensitive modular bounds}\label{sec:coeff}

In the modular setting, the annihilator polynomial may have substantially smaller binomial support than the
full set $\{0,1,\dots,s\}$.
As a result, the effective ambient dimension of the polynomial method can collapse to a much smaller sum of
Boolean levels.
This yields a gap-free bound depending only on $\bsupp(L)$.

For $0\le j\le s$, let $\binom{t}{j}\in\mathbb{F}_p[t]$ denote the binomial polynomial.
Since $s<p$, the polynomials $\binom{t}{0},\dots,\binom{t}{s}$ form a basis of the space of polynomials of
degree at most $s$ over $\mathbb{F}_p$.
Given $L\subseteq\mathbb{F}_p$ with $\abs{L}=s$, write
\begin{equation}\label{eq:PL-expand}
  P_L(t)=\sum_{j=0}^{s} c_j(L)\binom{t}{j},
\end{equation}
and recall that $\bsupp(L)=\{j:c_j(L)\neq 0\}$.

The key point is that the binomial basis is combinatorially canonical:
$\binom{\abs{A\cap B}}{j}$ counts the number of $j$-subsets contained in $A\cap B$.
Thus each active coefficient $c_j(L)$ corresponds to an actual Boolean level, and only those levels in
$\bsupp(L)$ contribute to the ambient space in the polynomial method.

Our first result in this direction is the following support-sensitive modular bound.
\begin{theorem}\label{thm:coeffbound}
Let $p$ be a prime and let $1\le s\le p-1$.
Let $L\subseteq \mathbb{F}_p$ with $\abs{L}=s$, and let $K\subseteq \mathbb{F}_p\setminus L$.
Let $\F\subseteq 2^{[n]}$ satisfy
\begin{enumerate}
  \item[(i)] $\abs{F}\pmod p\in K$ for every $F\in\F$;
  \item[(ii)] $\abs{E\cap F}\pmod p\in L$ for all distinct $E,F\in\F$.
\end{enumerate}
Then
$$
  \abs{\F}\le \sum_{j\in \bsupp(L)}\binom{n}{j}.
$$
\end{theorem}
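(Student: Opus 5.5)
For each $A\in\F$ define $f_A:\{0,1\}^n\to\mathbb{F}_p$ by $f_A(x):=P_L(v_A\cdot x)$, and expand $P_L$ in the binomial basis as in \eqref{eq:PL-expand}. The plan is to show that the functions $f_A$ are linearly independent and all lie in a space of dimension $\sum_{j\in\bsupp(L)}\binom{n}{j}$.

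\textbf{Step 1: the evaluation matrix is diagonal.} For $B\subseteq[n]$ we have $f_A(B)=P_L(\abs{A\cap B}\bmod p)$.
\begin{itemize}
  \item If $B\in\F$ and $B\neq A$, condition (ii) gives $\abs{A\cap B}\bmod p\in L$, so $f_A(B)=0$.
  \item For $B=A$, we get $f_A(A)=P_L(\abs{A}\bmod p)$. This is nonzero, since $\abs{A}\bmod p\in K$ by (i) and $K\cap L=\varnothing$.
\end{itemize}
So the matrix $(f_A(B))_{A,B\in\F}$ is diagonal with nonzero diagonal, and the $f_A$ are linearly independent over $\mathbb{F}_p$. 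No ordering by size is needed, and no gap condition enters.

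\textbf{Step 2: identify the span.} On $\{0,1\}^n$ we have the identity
$$
  \binom{v_A\cdot x}{j}=\sum_{T\subseteq A,\ \abs{T}=j}x_T ,
$$
valid over $\mathbb{Z}$ and hence after reduction mod $p$. It holds because $v_A\cdot x=\abs{A\cap X}$ at $x=v_X$, and $\binom{\abs{A\cap X}}{j}$ counts the $j$-subsets $T$ of $A\cap X$, each contributing $x_T(X)=1$.

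One point needs care: $\binom{t}{j}$ as a polynomial over $\mathbb{F}_p$ with $j<p$ has coefficients involving $1/j!$, which is legitimate since $j\le s<p$. Its value at the integer $t=\abs{A\cap X}$, reduced mod $p$, equals the integer binomial coefficient reduced mod $p$. This holds because $\binom{t}{j}=(t)_j/j!$ is an integer and $j!$ is invertible mod $p$. Also, $P_L(v_A\cdot x)$ is evaluated through $\abs{A\cap X}\bmod p$, which is consistent because polynomials over $\mathbb{F}_p$ respect reduction.

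Hence
$$
  f_A=\sum_{j\in\bsupp(L)}c_j(L)\sum_{T\subseteq A,\ \abs{T}=j}x_T ,
$$
so every $f_A$ lies in $\operatorname{span}\{x_T:\abs{T}\in\bsupp(L)\}$ as a function on $\Omega$.

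\textbf{Step 3: conclude.} The monomials $x_T$ are linearly independent functions on $\Omega$, for instance by the triangularity argument in Lemma~\ref{lem:adjoin}. So this span has dimension $\sum_{j\in\bsupp(L)}\binom{n}{j}$, and the bound follows from Step 1.

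The main obstacle, mild as it is, is the justification in Step 2 that the binomial expansion of $P_L$ transfers correctly to functions on the cube over $\mathbb{F}_p$. The essential input is that $s<p$, which makes the $\binom{t}{j}$ for $j\le s$ well defined over $\mathbb{F}_p$ and agree with the combinatorial counts mod $p$.
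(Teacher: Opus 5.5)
Your proof is correct and is essentially the paper's argument, phrased in terms of functions rather than a bilinear form. The monomial-coefficient vector of your $f_A$ is $\bigoplus_{j\in\bsupp(L)}c_j(L)\,u_A^{(j)}$. The value $f_A(A')=\sum_{j}c_j(L)\binom{\abs{A\cap A'}}{j}=P_L(\abs{A\cap A'})$ is exactly the paper's weighted pairing of $w_A$ and $w_{A'}$ in $V=\bigoplus_{j\in\bsupp(L)}\mathbb{F}_p^{\binom{[n]}{j}}$. So your diagonal evaluation matrix is the paper's orthogonality relation, and the dimension count is the same.

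Your Step 3 is not actually needed: a span of $\sum_{j\in\bsupp(L)}\binom{n}{j}$ functions has at most that dimension whether or not the monomials are independent.
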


\begin{proof}
For each $j$ and each $A\subseteq[n]$, let $u_A^{(j)}\in\mathbb{F}_p^{\binom{[n]}{j}}$ be the $j$-incidence vector
whose $S$-coordinate is $1$ if $S\subseteq A$ and $0$ otherwise.
Then for any $A,B\subseteq[n]$,
$$
  \langle u_A^{(j)},u_B^{(j)}\rangle=\binom{\abs{A\cap B}}{j}.
$$
Let
$$
  V:=\bigoplus_{j\in\bsupp(L)}\mathbb{F}_p^{\binom{[n]}{j}}
$$
and define
$$
  w_A:=\bigoplus_{j\in\bsupp(L)}u_A^{(j)}\in V.
$$
Consider the symmetric bilinear form $B:V\times V\to\mathbb{F}_p$ given by
$$
  B(x,y):=\sum_{j\in\bsupp(L)} c_j(L)\langle x^{(j)},y^{(j)}\rangle,
$$
where $x^{(j)}$ denotes the $j$th component of $x$.
Then for any $A,B\subseteq[n]$,
$$
  B(w_A,w_B)=\sum_{j=0}^{s}c_j(L)\binom{\abs{A\cap B}}{j}=P_L(\abs{A\cap B}).
$$
If $A\neq B$ are in $\F$, then $\abs{A\cap B}\pmod p\in L$, so $P_L(\abs{A\cap B})=0$.
If $A\in\F$, then $\abs{A}\pmod p\in K\subseteq\mathbb{F}_p\setminus L$, so $P_L(\abs{A})\neq 0$.
Hence the vectors $\{w_A:A\in\F\}$ are pairwise $B$-orthogonal with nonzero self-pairing, and are therefore
linearly independent.
Consequently,
$$
  \abs{\F}\le \dim V=\sum_{j\in\bsupp(L)}\binom{n}{j}.
$$
\end{proof}

The same support-sensitive argument also admits non-shadows on the active levels.

\begin{theorem}\label{thm:coeffbound-nonshadow}
Under the hypotheses of Theorem~\ref{thm:coeffbound},
$$
  \abs{\F}+\sum_{j\in\bsupp(L)}\abs{\N_j(\F)}\le \sum_{j\in\bsupp(L)}\binom{n}{j}.
$$
Equivalently,
$$
  \abs{\F}\le \sum_{j\in\bsupp(L)}\abs{\partial_j\F}.
$$
\end{theorem}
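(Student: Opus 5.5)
The idea is to refine the linear-algebra argument of Theorem~\ref{thm:coeffbound}. The vectors $w_A$ ($A\in\F$) live in $V=\bigoplus_{j\in\bsupp(L)}\mathbb{F}_p^{\binom{[n]}{j}}$, and each $w_A$ is supported only on coordinates $S$ with $S\subseteq A$. Hence every $w_A$ lies in the coordinate subspace
$$
  W:=\bigoplus_{j\in\bsupp(L)}\mathbb{F}_p^{\partial_j\F}\subseteq V,
$$
spanned by the standard basis vectors $e_S$ with $S\in\partial_j\F$ and $j\in\bsupp(L)$. Indeed, if $S\in\N_j(\F)$, then $S\not\subseteq A$ for all $A\in\F$, so the $S$-coordinate of $u_A^{(j)}$ vanishes.

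The plan is then as follows. First, recall from the proof of Theorem~\ref{thm:coeffbound} that the vectors $\{w_A:A\in\F\}$ are linearly independent. That argument used only the bilinear form $B$ and the values $B(w_A,w_{A'})=P_L(\abs{A\cap A'})$, and this is unaffected by where the vectors live. Second, note as above that every $w_A\in W$. Third, conclude
$$
  \abs{\F}\le\dim W=\sum_{j\in\bsupp(L)}\abs{\partial_j\F},
$$
which is the second form of the statement. Rewriting with $\abs{\partial_j\F}=\binom{n}{j}-\abs{\N_j(\F)}$ gives the first form.

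Alternatively, to match the ``adjoin non-shadows'' viewpoint of Lemma~\ref{lem:adjoin}, we could show directly that $\{w_A\}\cup\{e_S: S\in\N_j(\F),\ j\in\bsupp(L)\}$ is linearly independent in $V$. Suppose $\sum_A\lambda_Aw_A+\sum_S\nu_Se_S=0$. Pairing with $w_{A_0}$ under $B$ gives $B(e_S,w_{A_0})=c_j(L)\cdot[S\subseteq A_0]=0$ for every non-shadow $S$. Hence $\lambda_{A_0}P_L(\abs{A_0})=0$, so every $\lambda_A$ vanishes. The remaining relation $\sum_S\nu_Se_S=0$ then forces $\nu_S=0$, since distinct standard basis vectors are independent. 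This again bounds the total count by $\dim V$.

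There is no serious obstacle. The only point requiring care is confirming that the pairing $B(e_S,w_A)$ vanishes exactly because $S$ is a non-shadow; this is immediate from the definition of $u_A^{(j)}$.
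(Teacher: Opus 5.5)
Your proof is correct and is essentially the paper's argument. The paper adjoins the standard basis vectors $e_T^{(j)}$ for $T\in\N_j(\F)$ to the independent family $\{w_A:A\in\F\}$ and gets independence by reading off the $T$-coordinates. That is the same observation as yours, that every $w_A$ lies in the shadow coordinate subspace $W$, and your alternative $B$-pairing check is a harmless variant of that coordinate argument.
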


\begin{proof}
Use the same ambient space $V$ and vectors $w_A$ as in the proof of Theorem~\ref{thm:coeffbound}.
For each $j\in\bsupp(L)$ and each $T\in\N_j(\F)$, let $e_T^{(j)}\in V$ be the standard basis vector at the
coordinate $T$ in the $j$th component.
Since $T$ is not contained in any member of $\F$, every $w_A$ has zero $T$-coordinate, while
$e_T^{(j)}$ has $T$-coordinate equal to $1$.
Therefore the family
$$
  \{w_A:A\in\F\}\cup \bigcup_{j\in\bsupp(L)}\{e_T^{(j)}:T\in\N_j(\F)\}
$$
is linearly independent. Indeed, looking at the coordinate corresponding to a fixed missing $T$ first forces the coefficient of
$e_T^{(j)}$ to vanish, since no vector $w_A$ has a nonzero entry there.
After removing all such terms, any remaining dependence would be a dependence among the vectors
$\{w_A:A\in\F\}$, which is impossible by the orthogonality argument in the proof of
Theorem~\ref{thm:coeffbound}.
A dimension count now yields the desired inequality.
\end{proof}

The most useful structural consequence arises when the residue set contains a long initial segment.
In that case the support is automatically forced into the top few levels.

\begin{corollary}\label{cor:almost-initial}
Let $p$ be a prime and let $1\le s\le p-1$.
Fix $m$ with $0\le m\le s-1$, and let $R\subseteq\{0,1,\dots,p-1\}$ be a set of size $m$
disjoint from $\{0,1,\dots,s-m-1\}$.
Set
$$
  L:=\{0,1,\dots,s-m-1\}\cup R\subseteq\mathbb{F}_p,
$$
and let $K\subseteq\mathbb{F}_p\setminus L$.
If $\F\subseteq 2^{[n]}$ satisfies the modular conditions \textup{(i)}--\textup{(ii)} of Theorem~\ref{thm:coeffbound}, then
$$
  \abs{\F}\le \sum_{i=0}^{m}\binom{n}{s-i}.
$$
More precisely,
$$
  \abs{\F}+\sum_{j\in\bsupp(L)}\abs{\N_j(\F)}\le \sum_{i=0}^{m}\binom{n}{s-i}.
$$
\end{corollary}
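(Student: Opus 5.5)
The plan is to deduce the corollary from Theorem~\ref{thm:coeffbound-nonshadow}. It suffices to show $\bsupp(L)\subseteq\{s-m,s-m+1,\dots,s\}$. Once that holds, $\sum_{j\in\bsupp(L)}\binom{n}{j}\le\sum_{i=0}^{m}\binom{n}{s-i}$, and both displayed inequalities follow at once. The first comes from dropping the nonnegative non-shadow terms.

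The key step is the support computation. Write $P_L(t)=(t)_{s-m}\,Q(t)$, where $(t)_{s-m}=t(t-1)\cdots(t-s+m+1)=(s-m)!\binom{t}{s-m}$ and $Q(t)=\prod_{\rho\in R}(t-\rho)$ has degree $m$. Two routes are available; I would give both briefly.

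\emph{Evaluation route.} Over $\mathbb{F}_p$ with $s<p$, the coefficients in $P_L(t)=\sum_j c_j\binom{t}{j}$ are given by finite differences: $c_j=(\Delta^jP_L)(0)=\sum_{i=0}^{j}(-1)^{j-i}\binom{j}{i}P_L(i)$. This Newton inversion is valid since the binomial polynomials $\binom{t}{j}$, $j\le s$, form a basis and $\binom{i}{j}$ is unitriangular for $0\le i,j\le s$. Since $P_L(i)=0$ for $i=0,\dots,s-m-1$ (these are roots), we get $c_j=0$ for all $j\le s-m-1$. One has to check that the integers $0,\dots,s-1$ map to distinct residues, which holds because $s<p$.

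\emph{Product route (cross-check).} Multiplying $\binom{t}{s-m}$ by any polynomial of degree $\le m$ yields a combination of $\binom{t}{s-m},\dots,\binom{t}{s}$. This uses the identity $\binom{t}{a}\binom{t-a}{b}=\binom{a+b}{a}\binom{t}{a+b}$ inductively, expanding $Q(t)$ in the basis $\binom{t-(s-m)}{b}$, $b\le m$. Denominators are harmless since all factorials involved are at most $s!$ and $s<p$.

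The main obstacle I expect is only making sure these binomial manipulations are legitimate in characteristic $p$, where $\binom{t}{j}$ involves division by $j!$. The condition $s\le p-1$ settles this throughout. The case $m=0$ recovers $P_L=s!\binom{t}{s}$ and the bound $\binom{n}{s}$.
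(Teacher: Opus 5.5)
Your proposal is correct. The reduction is the same as the paper's: deduce the corollary from Theorem~\ref{thm:coeffbound-nonshadow} once $\bsupp(L)\subseteq\{s-m,\dots,s\}$ is known.

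Your ``product route'' is essentially the paper's argument. The paper multiplies $(t)_{s-m}$ by $\prod_{\rho\in R}(t-\rho)$ using the recursion $t\,(t)_k=(t)_{k+1}+k\,(t)_k$ and induction on the degree of the multiplier. You instead expand the multiplier in the basis $\binom{t-(s-m)}{b}$ and apply $\binom{t}{a}\binom{t-a}{b}=\binom{a+b}{a}\binom{t}{a+b}$ in one step.

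Your primary ``evaluation route'' is genuinely different. It identifies $c_j(L)=(\Delta^jP_L)(0)$ through the unitriangular matrix $\bigl(\binom{i}{j}\bigr)_{0\le i,j\le s}$. This uses only that $0,\dots,s-m-1$ are roots of $P_L$, with no manipulation of the factorization. It also gives a little more than the paper's route. Let $k$ be the least nonnegative integer whose residue is not in $L$. Then $k\le s$, because $0,\dots,s$ are $s+1$ distinct residues and $|L|=s$. The same triangular system gives $c_k(L)=P_L(k)\neq 0$. Hence $\min\bsupp(L)$ equals the length of the initial run of $L$ exactly, not just bounded below by $s-m$. The explicit formula for the surviving $c_j$ also bears on the paper's closing question about which levels are active.

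The paper's route, by contrast, is a self-contained algebraic expansion that avoids the inversion formula. Both routes use $s<p$ in the same way, to make $j!$ invertible so that $\binom{t}{j}$ makes sense in $\mathbb{F}_p[t]$ for $j\le s$.
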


\begin{proof}
Write $(t)_j:=t(t-1)\cdots(t-j+1)$.
For the present $L$,
$$
  P_L(t)=(t)_{s-m}\prod_{\rho\in R}(t-\rho).
$$
Multiplying $(t)_{s-m}$ by a polynomial of degree at most $m$ cannot create falling-factorial terms of degree
below $s-m$.
Indeed, using the identity
$$
  t\,(t)_k=(t)_{k+1}+k\,(t)_k,
$$
and inducting on the degree of the multiplier, one sees that
$$
  P_L(t)\in \operatorname{span}\{(t)_s,(t)_{s-1},\dots,(t)_{s-m}\}.
$$
Since $(t)_j=j!\binom{t}{j}$ and $j!\neq 0$ in $\mathbb{F}_p$ for $j\le s<p$, this implies
$$
  \bsupp(L)\subseteq\{s-m,s-m+1,\dots,s\}.
$$
The stated bounds now follow immediately from Theorem~\ref{thm:coeffbound-nonshadow}.
\end{proof}

\begin{corollary}\label{cor:consecutive}
Let $p$ be a prime and let $2\le s\le p-1$.
If
$$
  L=\{0,1,\dots,s-1\}\subseteq\mathbb{F}_p,
$$
and $K\subseteq\mathbb{F}_p\setminus L$, then every family $\F\subseteq 2^{[n]}$ satisfying the modular
conditions \textup{(i)}--\textup{(ii)} of Theorem~\ref{thm:coeffbound} obeys
$$
  \abs{\F}\le \binom{n}{s}.
$$
Moreover, if $s\in K$, equality holds for $\F=\binom{[n]}{s}$.
In particular, whenever $|K|=r$ with $2 \le r \le s$, and $n \ge s$, the modular ABS bound $N(n,s,r)$ is not attainable.
\end{corollary}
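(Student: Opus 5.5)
The plan has three parts: specialize Corollary~\ref{cor:almost-initial} to $m=0$, exhibit the extremal family, and compare with $N(n,s,r)$.

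\emph{Upper bound.} Take Corollary~\ref{cor:almost-initial} with $m=0$ and $R=\emptyset$. Then $L=\{0,1,\dots,s-1\}$ and the bound reads $\abs{\F}\le\binom{n}{s}$. One can also see this directly. Here $P_L(t)=(t)_s=s!\binom{t}{s}$, and $s!\neq0$ in $\mathbb{F}_p$ because $s<p$. Hence $\bsupp(L)=\{s\}$, and Theorem~\ref{thm:coeffbound} gives the bound.

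\emph{Equality.} Assume $s\in K$ and set $\F=\binom{[n]}{s}$. Every member has size $s$, and $s\bmod p = s\in K$ since $s<p$, so condition (i) holds. For distinct $E,F\in\F$ we have $0\le\abs{E\cap F}\le s-1$, and these residues are exactly $L$, so condition (ii) holds. Therefore $\abs{\F}=\binom{n}{s}$ attains the bound. The hypothesis $K\cap L=\emptyset$ is consistent with $s\in K$ because $s\notin L$.

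\emph{Non-attainability.} Suppose $\abs{K}=r\ge2$ and $n\ge s$. Every admissible family satisfies $\abs{\F}\le\binom{n}{s}$, while
$$N(n,s,r)=\binom{n}{s}+\sum_{i=1}^{r-1}\binom{n}{s-i}\ge\binom{n}{s}+\binom{n}{s-1}.$$
Since $n\ge s\ge s-1\ge1$, we have $\binom{n}{s-1}\ge1$, so $N(n,s,r)>\binom{n}{s}\ge\abs{\F}$. Hence the bound $N(n,s,r)$ is never attained in this setting.

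Each step is short, and there is no real obstacle. The points that need care are checking that $s!$ is invertible modulo $p$, checking the residue bookkeeping in the equality case (intersection sizes lie below $p$, so they reduce to themselves), and confirming that the term $\binom{n}{s-1}$ is positive.
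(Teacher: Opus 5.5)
Your proposal is correct and follows the paper's proof: $P_L(t)=(t)_s=s!\binom{t}{s}$ gives $\bsupp(L)=\{s\}$, Theorem~\ref{thm:coeffbound} yields $|\mathcal{F}|\le\binom{n}{s}$, the family $\binom{[n]}{s}$ is admissible when $s\in K$, and $N(n,s,r)>\binom{n}{s}$ for $r\ge 2$. Your added checks (that $s!$ is invertible mod $p$, the residue bookkeeping for intersection sizes, and $\binom{n}{s-1}\ge 1$ when $n\ge s$) only make explicit what the paper leaves implicit.
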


\begin{proof}
Here $P_L(t)=(t)_s=s!\binom{t}{s}$, so $\bsupp(L)=\{s\}$.
The bound follows from Theorem~\ref{thm:coeffbound} or Theorem~\ref{thm:coeffbound-nonshadow}.
If $s\in K$, then $\binom{[n]}{s}$ is an admissible family of size $\binom{n}{s}$.
Since
$$
  N(n,s,r)=\binom{n}{s}+\binom{n}{s-1}+\cdots+\binom{n}{s-r+1}>\binom{n}{s}
$$
for every $2 \le r \le s$, the final claim is immediate.
\end{proof}

\section{Concluding remarks}

Theorem~\ref{thm:main} shows that the ABS extremal problem is controlled not only by the upper bound
$N(n,s,r)$, but also by how completely $\F$ covers the top $r$ levels of the Boolean lattice.
In particular, equality forces
$$
  \N_j(\F)=\emptyset \qquad \text{for all } j\in\{s-r+1,\dots,s\},
$$
so every extremal family must cover every set on each relevant top level.
Finally, the binomial-support viewpoint is not confined to the modular setting. Let
\(L \subseteq \mathbb Z_{\ge 0}\) with \(|L|=s\), and expand
$$
P_L(t):=\prod_{\ell\in L}(t-\ell)=\sum_{j=0}^{s} c_j(L)\binom{t}{j}
$$
in \(\mathbb R[t]\). Write
$$
bsupp(L):=\{j\in\{0,1,\ldots,s\}: c_j(L)\neq 0\}.
$$
Then the same orthogonality argument as in Theorems~6 and~7 yields a nonmodular
support-sensitive analogue in which the hypothesis \(k_i>s-r\) is replaced simply by
\(K\cap L=\varnothing\).

\begin{theorem}
Let \(K=\{k_1,\ldots,k_r\}\) and \(L=\{\ell_1,\ldots,\ell_s\}\) be sets of nonnegative integers with
\(K\cap L=\varnothing\). If \(F \subseteq \bigcup_{k\in K}\binom{[n]}{k}\) is \(L\)-intersecting, then
$$
|F|+\sum_{j\in bsupp(L)} |N_j(F)|
\le
\sum_{j\in bsupp(L)} \binom{n}{j}.
$$
Equivalently,
$$
|F|\le \sum_{j\in bsupp(L)} |\partial_jF|.
$$
In particular,
$$
|F|\le \sum_{j\in bsupp(L)} \binom{n}{j}.
$$
\end{theorem}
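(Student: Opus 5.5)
The plan is to repeat the orthogonality argument from the proofs of Theorem~\ref{thm:coeffbound} and Theorem~\ref{thm:coeffbound-nonshadow}, now over $\mathbb{R}$ and with the exact intersection sizes in place of residues. For each $A\subseteq[n]$ and each $j$, let $u_A^{(j)}\in\mathbb{R}^{\binom{[n]}{j}}$ be the $j$-incidence vector of $A$. Then
$$
  \langle u_A^{(j)},u_B^{(j)}\rangle=\binom{\abs{A\cap B}}{j}.
$$
Take the ambient space $V:=\bigoplus_{j\in\bsupp(L)}\mathbb{R}^{\binom{[n]}{j}}$, the vectors $w_A:=\bigoplus_{j\in\bsupp(L)}u_A^{(j)}$, and the symmetric bilinear form
$$
  \beta(x,y):=\sum_{j\in\bsupp(L)}c_j(L)\langle x^{(j)},y^{(j)}\rangle .
$$
Expanding $P_L$ in the binomial basis over $\mathbb{R}$ (valid for all degrees, since $\binom{t}{0},\dots,\binom{t}{s}$ span the polynomials of degree at most $s$ over $\mathbb{R}$), we get
$$
  \beta(w_A,w_B)=\sum_{j=0}^{s}c_j(L)\binom{\abs{A\cap B}}{j}=P_L(\abs{A\cap B}),
$$
because the terms with $c_j(L)=0$ contribute nothing.

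The key step is to check the orthogonality pattern. For distinct $A,B\in\F$ we have $\abs{A\cap B}\in L$, so $\beta(w_A,w_B)=P_L(\abs{A\cap B})=0$. For $A\in\F$ we have $\abs{A}\in K$ and $K\cap L=\varnothing$, so $P_L(\abs{A})=\prod_{\ell\in L}(\abs{A}-\ell)\neq 0$. Suppose $\sum_A\lambda_A w_A=0$. Pairing this relation with $w_{A_0}$ under $\beta$ gives $\lambda_{A_0}P_L(\abs{A_0})=0$, hence $\lambda_{A_0}=0$. Thus $\{w_A:A\in\F\}$ is linearly independent in $V$. Notice that no hypothesis of the form $k_i>s-r$ is used anywhere: the role of the filter polynomial $g$ from the proof of Theorem~\ref{thm:main} is played entirely by the nonvanishing $P_L(\abs{A})\ne0$.

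For the non-shadow strengthening, adjoin the standard basis vectors $e_T^{(j)}$ for $j\in\bsupp(L)$ and $T\in\N_j(\F)$, exactly as in Theorem~\ref{thm:coeffbound-nonshadow}. Since $T$ lies in no member of $\F$, every $w_A$ has zero $T$-coordinate in the $j$th component, while $e_T^{(j)}$ has a $1$ there. So in any linear relation, reading off that coordinate kills the coefficient of $e_T^{(j)}$. The remaining relation involves only the $w_A$, which are independent by the previous paragraph.

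The dimension count then gives
$$
  \abs{\F}+\sum_{j\in\bsupp(L)}\abs{\N_j(\F)}\le\dim V=\sum_{j\in\bsupp(L)}\binom{n}{j}.
$$
The shadow form follows from $\abs{\N_j(\F)}=\binom{n}{j}-\abs{\partial_j\F}$, and the final bound follows by dropping the nonnegative non-shadow terms. I expect no real obstacle here; the only point needing care is that $\beta$ may be indefinite or degenerate, since the $c_j(L)$ can have any sign. This is harmless because the independence argument uses only that the Gram matrix of $\{w_A:A\in\F\}$ is diagonal with nonzero diagonal entries, not positivity.
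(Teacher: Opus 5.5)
Your proposal is correct and matches the argument the paper has in mind. The paper omits the proof but says the orthogonality argument of Theorems~\ref{thm:coeffbound} and~\ref{thm:coeffbound-nonshadow} carries over to $\mathbb{R}$, with $K\cap L=\varnothing$ giving $P_L(\abs{A})\neq 0$ in place of the modular condition; that is exactly what you do, including adjoining the $e_T^{(j)}$ for non-shadows and noting that only the diagonal Gram structure with nonzero diagonal is needed, not positivity of the form.
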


We omit the proof.
In the modular setting, Theorems~\ref{thm:coeffbound} and~\ref{thm:coeffbound-nonshadow} suggest the following natural problem:
for a given residue pattern $L$, when is the bound
$$\sum_{j\in\bsupp(L)}\binom{n}{j}$$
best possible? And how do the equality or near-equality cases reflect the active shadow levels determined by
$\bsupp(L)$?
Even for simple nonconsecutive residue sets this appears to be delicate.

It is natural to ask whether the multilevel non-shadow mechanism extends beyond the Boolean lattice
to the broader semilattice framework of Alon--Babai--Suzuki.
Such an extension would in particular yield a Grassmann-lattice analogue.
We leave this direction to future work and keep the present paper focused on the Boolean
and modular settings.

More broadly, the results of this paper indicate that in ABS-type intersection problems the correct measure of
the polynomial method is often not the full degree-$s$ ambient space, but the smaller effective ambient space
selected either by missing shadows or by the binomial support of the annihilator polynomial.

\end{document}